\documentclass[preprint,12pt]{elsarticle}
\usepackage[T1]{fontenc}
\usepackage[utf8]{inputenc}
\usepackage{lmodern}
\usepackage{amsmath,amssymb,amsthm,mathtools}
\usepackage{booktabs}
\usepackage{graphicx}
\usepackage{placeins}
\usepackage{tabularx}
\usepackage{enumitem}
\usepackage{microtype}
\usepackage{xcolor} 
\usepackage[colorlinks=true,linkcolor=blue!45!black,
            citecolor=blue!45!black,urlcolor=blue!55!black]{hyperref}
\hypersetup{
  pdftitle={Schatten norms and determinants of linear combinations of matrix tensor powers},
  pdfauthor={Martin Áron Juhász and Mihály Weiner},
  pdfkeywords={tensor powers, Schatten norms, determinants, Schur-Weyl duality, virtual representations}
}

\newtheorem{theorem}{Theorem}[section]
\newtheorem{proposition}[theorem]{Proposition}
\newtheorem{corollary}[theorem]{Corollary}

\theoremstyle{definition}

\theoremstyle{remark}
\newtheorem{remark}[theorem]{Remark}

\DeclareMathOperator{\Sym}{Sym}
\DeclareMathOperator{\Tr}{Tr}
\DeclareMathOperator{\GL}{GL}
\DeclareMathOperator{\U}{U}

\DeclareMathOperator{\sgn}{sgn}
\DeclareMathOperator{\vecop}{vec}
\newcommand{\C}{\mathbb{C}}
\newcommand{\N}{\mathbb{N}}
\newcommand{\Sd}{\mathfrak{S}}
\newcommand{\Rep}{\operatorname{Rep}}
\newcommand{\cS}{\mathbb{S}}
\newcommand{\abs}[1]{\lvert #1\rvert}
\newcommand{\norm}[1]{\left\lVert #1\right\rVert}

\begin{document}
\begin{frontmatter}
\title{Schatten norms and determinants of linear combinations of matrix tensor powers via virtual representations \tnoteref{title}}

\author[bme]{Martin Áron Juhász}
\author[bme]{Mihály Weiner\corref{cor1}}
\cortext[cor1]{Corresponding author}
\address[bme]{Department of Mathematical Analysis, Institute of Mathematics,
Budapest University of Technology and Economics, Műegyetem rkp. 3.,
H-1111 Budapest, Hungary}
\tnotetext[title]{This work was supported by the National Research, Development and Innovation Office of Hungary (NKFIH) via the grants ADVANCED\_25 152599, K 146380 and EXCELLENCE
	151342, and by the Ministry of Culture and Innovation and the NKFIH within the Quantum Information National Laboratory of Hungary (Grant
	No. 2022-2.1.1-NL-2022-00004).}
\begin{abstract}
	Let
	$
	X_n=\sum_{i=1}^s t_i A_i^{\otimes n},
	$	
	where \(A_1,\ldots,A_s\in M_d(\mathbb C)\) and \(t_1,\ldots,t_s\in\mathbb C\) are fixed, while \(n\) grows. Direct computation of determinants or Schatten norms of \(X_n\) is exponential in \(n\). For a single tensor power these quantities are elementary, and even the determinant of a generic two-term combination admits a reduction to polynomially many scalar factors; however, no analogous elementary reduction is available for three or more terms.
	
	We give an exact representation-theoretic method which, for fixed \(d\) and \(s\), computes \(\|X_n\|_p\), \(0<p<\infty\), and determinants in polynomial time in \(n\). Schur--Weyl duality yields a simultaneous block decomposition, while Jacobi--Trudi identities in the Grothendieck ring replace Schur modules by signed combinations of tensor products of symmetric powers.
	
	For \(d=3\), each irreducible contribution reduces to the difference of two explicitly computable symmetric-power terms, leading to an open-source implementation. In a single-thread CPU benchmark, a genuine three-term \(3\times3\) trace-norm problem with \(n=18\) is evaluated in about \(47\) seconds, whereas just storing the unreduced matrix would require approximately \(2.4\times10^{18}\) bytes. Direct and reduced computations agree to relative error below \(3.4\times10^{-15}\) throughout their common range \(n\leq9\).
\end{abstract}

\begin{keyword}
Tensor powers, quantum hypothesis testing, Schatten norms, Schur--Weyl duality,
representation rings, Jacobi--Trudi identity
\MSC[2020] 15A69 \sep 20G05 \sep 65F30 \sep 81P45
\end{keyword}

\end{frontmatter}

\section{Introduction}
\label{sec:introduction}

For fixed matrices $A_1,\ldots,A_s \in M_d(\C)$ and scalars
$t_1,\ldots,t_s\in\C$, consider
\begin{equation}\label{eq:mainquantity}
  X_n=\sum_{i=1}^s t_iA_i^{\otimes n}.
\end{equation}
The order of $X_n$ is $d^n$, and a dense representation contains $d^{2n}$
entries.  Nevertheless, the cases $s=1$, $s=2$, and $s\geq3$ have rather
different characters.

For one tensor-power term, many standard quantities are immediate.  For
example,
\begin{equation}\label{eq:one-term-trivial}
  \norm{A^{\otimes n}}_p=\norm{A}_p^n,
  \qquad
  \det(A^{\otimes n})=\det(A)^{n d^{n-1}}.
\end{equation}
The determinant of a two-term combination is still elementary in a useful
sense.  Suppose that $A_1$ is invertible, put $B=A_1^{-1}A_2$, and let
$\beta_1,\ldots,\beta_d$ be the eigenvalues of $B$, repeated with algebraic
multiplicity.  Factoring out $A_1^{\otimes n}$ and grouping the eigenvalues of
$B^{\otimes n}$ by occupation numbers gives
\begin{align}
 &\det\!\left(t_1A_1^{\otimes n}+t_2A_2^{\otimes n}\right)
 \notag\\
 &\quad=\det(A_1)^{n d^{n-1}}
 \prod_{\substack{\alpha\in\N^d\\\abs{\alpha}=n}}
 \left(t_1+t_2\prod_{j=1}^d\beta_j^{\alpha_j}\right)^{
 n!/(\alpha_1!\cdots\alpha_d!)}.
\label{eq:two-term-determinant}
\end{align}
Thus, for fixed $d$, only $\binom{n+d-1}{d-1}$ distinct scalar factors are
needed.  There is no analogous factorization for three generic matrices:
a third term cannot in general be expressed through the spectrum of one
relative matrix.  This elementary contrast is one indication that the first
genuinely difficult case begins with three tensor-power terms.

Quantum hypothesis testing gives a second indication.  The minimum Bayesian
error probability for distinguishing two states $\rho$ and $\sigma$, with
prior probabilities $\pi_0$ and $\pi_1$, is
\begin{equation}\label{eq:helstrom}
  P_{\mathrm e}(n)
  =\frac12\left(1-
    \norm{\pi_0\rho^{\otimes n}-\pi_1\sigma^{\otimes n}}_1\right);
\end{equation}
see \cite{Helstrom,Holevo}.  Exact finite-copy evaluation is already
nontrivial, but its asymptotic exponential rate has the single-letter
quantum Chernoff formula \cite{AudenaertEtAl}
\begin{equation}\label{eq:quantum-chernoff}
 \lim_{n\to\infty}-\frac1n\log P_{\mathrm e}(n)
 =-\log\inf_{0\leq q\leq1}\Tr\rho^q\sigma^{1-q}.
\end{equation}

Now let the null hypothesis contain two states, $\rho_{0,a}$ and
$\rho_{0,b}$, with prior weights $\pi_{0,a}$ and $\pi_{0,b}$, while the
alternative consists of a state $\sigma$ with weight $\pi_1$.  The optimal
Bayesian error becomes
\begin{equation}\label{eq:composite-helstrom}
 P_{\mathrm e}(n)=\frac12\left(1-
 \norm{\pi_{0,a}\rho_{0,a}^{\otimes n}
      +\pi_{0,b}\rho_{0,b}^{\otimes n}
      -\pi_1\sigma^{\otimes n}}_1\right).
\end{equation}
This is the first three-term trace-norm problem.  In contrast with the simple
case, no general single-letter formula for its asymptotic exponent is known;
indeed, a natural candidate suggested by the classical theory was disproved in \cite{MosonyiSzilagyiWeiner} by one of the present authors and  two collaborators.  The same three-term threshold therefore
appears both in elementary determinant algebra and in the motivating quantum
information problem.

The repeated tensor power in \eqref{eq:mainquantity} has much more symmetry
than a generic $d^n\times d^n$ matrix.  Schur--Weyl duality makes this
symmetry visible: the map $A\mapsto A^{\otimes n}$ splits into polynomial
$\GL(d,\C)$-representations indexed by partitions of $n$.  This observation
alone is not yet a numerical method, because explicit matrices for general
Schur functors are cumbersome.  Our main point is that the required
irreducible matrices need not be constructed.  The Jacobi--Trudi identity
expresses their classes as virtual combinations of tensor products of
symmetric powers.  Since the $p$th power of the Schatten $p$-norm is additive
under orthogonal direct sums, identities in the representation ring become
exact signed identities of numerical quantities.  For determinants and
characteristic polynomials, direct-sum additivity is replaced by
multiplicativity, and virtual coefficients become integer exponents.

Two technical points are essential.  First, singular values are preserved by
unitary, not arbitrary, changes of basis.  We therefore obtain the block
decomposition from the compact group $\U(d)$ and then extend the same fixed
unitary intertwiner to all of $M_d(\C)$.  Second, a virtual representation is
a formal difference, not a matrix with negative blocks.  It is useful here
only because the functional being evaluated is additive on orthogonal direct
sums.

The present implementation supports $2\times2$ and $3\times3$ matrices.
The reproducible benchmarks in Section~\ref{sec:numerics} use a single CPU
thread and include both two-term and genuinely three-term $3\times3$
trace-norm expressions.  For the three-term family, a fresh-process evaluation
at $n=18$ takes about $47.6$ seconds.  Direct dense storage at this tensor power
would already require approximately $2.4\times10^{18}$ bytes.  At $n=9$,
where brute-force comparison is still feasible on the benchmark machine, the
reduced method agrees with direct evaluation to relative error
$3.3\times10^{-15}$ and its warm evaluation is about $7.6\times10^4$ times
faster.  The implementation data extend to $n=30$ for $3\times3$ inputs; at
that power the unreduced matrix has order
\[
  3^{30}=205\,891\,132\,094\,649,
\]
whereas the largest virtual block has order only $18496$.

\subsection*{Main contributions}

The main contribution is the development of a practical new method for the mentioned kind of computations involving large tensor powers. In particular:
\begin{enumerate}[label=(\roman*),leftmargin=2.2em]
  \item We prove a simultaneous unitary Schur--Weyl block decomposition valid
  for every matrix $A\in M_d(\C)$, including singular and nonnormal matrices
  (Theorem~\ref{thm:universal-block}), and show how representation-ring identities yield exact identities for
  additive Schatten functionals and, separately, for multiplicative
  invariants such as determinants and characteristic polynomials
  (Propositions~\ref{prop:additive-functional} and
  \ref{prop:multiplicative-functional}).

  \item We combine determinant twists with Jacobi--Trudi to give a fixed-$d$ master formula involving only symmetric powers
  (Theorem~\ref{thm:master-formula}).  For $d=3$ each Schur contribution is a difference of two explicitly computable terms.

  \item We give an implementation-level description of the sparse monomial tables, coefficient matrices, cache structure, aggregation of identical virtual blocks, and the online evaluation algorithm.

  \item We establish the dense worst-case complexity
  $O(n^{3(d-1)^2+d-1})$, give exact block-size and memory comparisons, report reproducible CPU benchmarks for two- and three-term trace-norm problems, and formulate numerical stability diagnostics for cancellation in the virtual formula.
\end{enumerate}

The original motivation was the composite-hypothesis problem above.  An
earlier project, carried out with Sloan Nietert, treated the $2\times2$ case
in Wolfram Mathematica \cite{WeinerNietertSoftware}.  That dimension is
exceptional: after a determinant twist every relevant irreducible
representation is a symmetric power.  Dimension three is the first case in
which virtual subtraction is genuinely needed.  The present paper develops
that case explicitly and places it in a construction valid for arbitrary
fixed $d$.
\section{A unitary decomposition valid for arbitrary matrices}

We first isolate the passage from the compact group to arbitrary complex
matrices.  This both fixes the Hilbert-space geometry needed for singular
values and explains why the final formulas apply beyond unitary or
invertible inputs.

Let $\lambda=(\lambda_1,\ldots,\lambda_d)$ be a partition of $n$, padded
with zeros, and write $\ell(\lambda)\leq d$ for the condition that at
most $d$ parts are nonzero.  Let $\cS_\lambda(\C^d)$ denote the Schur
module of highest weight $\lambda$, and let $[\lambda]$ be the Specht
module of the symmetric group $\Sd_n$.  Set
\[
  f^\lambda=\dim[\lambda].
\]
One convenient form of the hook-length formula is
\begin{equation}\label{eq:specht-dimension}
  f^\lambda
  =
  n!\,
  \frac{\displaystyle\prod_{1\leq i<j\leq d}
  (\lambda_i-\lambda_j+j-i)}
  {\displaystyle\prod_{i=1}^d(\lambda_i+d-i)!}.
\end{equation}

\begin{theorem}[Simultaneous unitary block decomposition]
\label{thm:universal-block}
For every $d,n\geq1$, there are Hilbert-space structures on the Schur
modules and a unitary map
\[
  W_n:(\C^d)^{\otimes n}\longrightarrow
  \bigoplus_{\substack{\lambda\vdash n\\\ell(\lambda)\leq d}}
  \cS_\lambda(\C^d)\otimes\C^{f^\lambda}
\]
such that, for every $A\in M_d(\C)$,
\begin{equation}\label{eq:universal-block}
  W_n A^{\otimes n}W_n^*
  =
  \bigoplus_{\substack{\lambda\vdash n\\\ell(\lambda)\leq d}}
  \cS_\lambda(A)\otimes I_{f^\lambda}.
\end{equation}
The map $W_n$ depends on $d$ and $n$, but not on $A$.
\end{theorem}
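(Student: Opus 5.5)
The plan is to build $W_n$ from the compact group $\U(d)$ and then extend the resulting identity from unitaries to all of $M_d(\C)$ by polynomial identity.

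\emph{Step 1: decompose for $\U(d)\times\Sd_n$.} On $(\C^d)^{\otimes n}$ with its standard inner product, $\U(d)$ acts by $U\mapsto U^{\otimes n}$ and $\Sd_n$ acts by permuting tensor factors. Both actions are unitary and they commute. Classical Schur--Weyl duality says that, as a $\GL(d,\C)\times\Sd_n$-module,
\[
(\C^d)^{\otimes n}\cong\bigoplus_{\lambda\vdash n,\ \ell(\lambda)\le d}\cS_\lambda(\C^d)\otimes[\lambda],
\]
and each summand is irreducible for $\U(d)\times\Sd_n$. By Weyl's unitary trick, restriction to $\U(d)$ preserves irreducibility, since $\U(d)$ is Zariski dense in $\GL(d,\C)$.

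\emph{Step 2: turn the isomorphism into a unitary.} Fix a $\U(d)$-invariant inner product on each $\cS_\lambda(\C^d)$; one exists by averaging over Haar measure. Fix an $\Sd_n$-invariant inner product on $[\lambda]$ and identify $[\lambda]$ with $\C^{f^\lambda}$ through an orthonormal basis. Let $E_\lambda$ be the isotypic components of $(\C^d)^{\otimes n}$ under the unitary $\U(d)\times\Sd_n$-action. They are pairwise orthogonal because they are the ranges of the central projections obtained by integrating characters, and these projections are self-adjoint. Each $E_\lambda$ is irreducible, so by Schur's lemma any intertwiner $E_\lambda\to\cS_\lambda\otimes\C^{f^\lambda}$ is unique up to a scalar. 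For an intertwiner $T$ between unitary irreducibles, $T^*T$ is a positive self-intertwiner and hence a positive scalar. Rescaling $T$ therefore makes it unitary. The direct sum of these unitaries is $W_n$, and it satisfies $W_nU^{\otimes n}W_n^*=\bigoplus_\lambda\cS_\lambda(U)\otimes I_{f^\lambda}$ for every $U\in\U(d)$.

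\emph{Step 3: extend from $\U(d)$ to $M_d(\C)$.} This is the main obstacle, because singular and nonnormal $A$ are not in the group. For fixed $W_n$, both sides of \eqref{eq:universal-block} are matrices whose entries are polynomial functions of the entries of $A$:
\begin{itemize}[leftmargin=2em]
\item the left side is polynomial because $A\mapsto A^{\otimes n}$ is;
\item the right side is polynomial because $\cS_\lambda$ is a polynomial representation. It extends to a polynomial map $M_d(\C)\to\operatorname{End}\cS_\lambda(\C^d)$, for instance by realizing $\cS_\lambda(\C^d)$ inside $(\C^d)^{\otimes n}$ via a Young symmetrizer, so that $\cS_\lambda(A)$ is the restriction of $A^{\otimes n}$.
\end{itemize}
A holomorphic polynomial vanishing on $\U(d)$ vanishes identically. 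The reason is that $\U(d)$ is a totally real submanifold of maximal dimension in $\GL(d,\C)$: its tangent space $i\mathfrak{u}(d)$ satisfies $\mathfrak u(d)\oplus i\mathfrak u(d)=M_d(\C)$. Equivalently, restricting to the diagonal torus, a polynomial vanishing for all $e^{i\theta}$ vanishes, and one can then pass through conjugation. Hence the identity holds on $\GL(d,\C)$, and by density of $\GL(d,\C)$ it holds on all of $M_d(\C)$.

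The one point to check carefully is that the "$\cS_\lambda(A)$" appearing in the statement is the same polynomial extension used in Step 3, so that the continuation is consistent. It is. On $\GL(d,\C)$ the representation is determined by its restriction to $\U(d)$, and it is polynomial, so its extension to $M_d(\C)$ is unique. Nothing in the construction used $A$, so $W_n$ depends only on $d$ and $n$.
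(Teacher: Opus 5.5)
Your proof is correct and follows the paper's structure. You obtain $W_n$ from the unitary $\U(d)\times\Sd_n$ Schur--Weyl decomposition, then continue the intertwining relation from $\U(d)$ to all of $M_d(\C)$.

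The difference lies in the continuation step. The paper proceeds in stages:
\begin{itemize}
\item it differentiates along one-parameter subgroups to get $\mathfrak{u}(d)$-intertwining;
\item it complexifies to $\mathfrak{gl}_d(\C)$ and integrates back using connectedness of $\GL(d,\C)$ (or matrix logarithms);
\item only then does it use polynomiality and density of $\GL(d,\C)$ to reach singular $A$.
\end{itemize}
You instead apply the identity theorem directly to the entries of the polynomial matrix $W_nA^{\otimes n}W_n^*-\bigoplus_\lambda\cS_\lambda(A)\otimes I_{f^\lambda}$. Since $\U(d)$ is a maximal totally real submanifold, hence Zariski dense in $M_d(\C)$, vanishing on $\U(d)$ forces vanishing on all of $M_d(\C)$ in one step. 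This avoids derived representations entirely, and it makes your closing appeal to density of $\GL(d,\C)$ redundant. The paper's route instead relies on Lie-theoretic facts and the elementary density of $\GL(d,\C)$. Your Step 2 also spells out the isotypic-component and Schur-lemma rescaling argument that the paper compresses into ``after choosing orthonormal bases.''

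Two small corrections:
\begin{itemize}
\item The tangent space of $\U(d)$ at $I$ is $\mathfrak{u}(d)$, not $i\mathfrak{u}(d)$. This is harmless, because the totally real condition $\mathfrak{u}(d)\oplus i\mathfrak{u}(d)=M_d(\C)$ is symmetric in the two.
\item Your parenthetical torus argument stops short. Conjugating diagonal matrices by unitaries reaches only the normal matrices. To finish it you still need that the Hermitian matrices, a real form of $M_d(\C)$, are Zariski dense, which is the same totally real fact your main argument uses.
\end{itemize}
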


\begin{proof}
Restrict first to $A=U\in\U(d)$.  The actions of $\U(d)$ and $\Sd_n$ on
$(\C^d)^{\otimes n}$ commute.  Complete reducibility for the compact
group, together with Schur--Weyl duality, gives the orthogonal
decomposition
\[
  (\C^d)^{\otimes n}
  \cong
  \bigoplus_{\substack{\lambda\vdash n\\\ell(\lambda)\leq d}}
  \cS_\lambda(\C^d)\otimes[\lambda].
\]
After choosing orthonormal bases, the corresponding intertwiner $W_n$
is unitary, and \eqref{eq:universal-block} holds for every $U\in\U(d)$.

Differentiate the intertwining relation along one-parameter subgroups of
$\U(d)$.  It follows that $W_n$ intertwines the derived representations
of the Lie algebra $\mathfrak{u}(d)$.  Since the complex linear span of
$\mathfrak{u}(d)$ is $\mathfrak{gl}_d(\C)$, it intertwines the
complexified derived representations as well.  The group
$\GL(d,\C)$ is connected, so the relation follows for every invertible
complex matrix.  Equivalently, one may use that every invertible complex
matrix has a logarithm and exponentiate the derived relation.

Finally, both sides of \eqref{eq:universal-block} are matrices whose
entries are polynomials in the entries of $A$.  Since $\GL(d,\C)$ is
dense in $M_d(\C)$, the identity extends to singular matrices.
\end{proof}

\begin{remark}[Why $\U(d)$ rather than $\GL(d,\C)$ or $\mathrm{SU}(d)$?]
\label{rem:why-unitary}
Starting from $\GL(d,\C)$ gives the correct algebraic decomposition but
does not by itself supply an orthogonal one: a general intertwining
matrix does not preserve singular values.  Compactness of $\U(d)$ gives
invariant Hermitian inner products and hence a unitary intertwiner.  The
group $\mathrm{SU}(d)$ is compact as well, but $\U(d)$ is more natural
here because determinant characters remain visible.  They are exactly
what separates polynomial $\GL(d,\C)$-representations that become
equivalent after restriction to $\mathrm{SU}(d)$.  Theorem
\ref{thm:universal-block} then transfers the unitary decomposition from
$\U(d)$ to all complex matrices.
\end{remark}

For $0<p<\infty$, the Schatten $p$-functional is
\[
   \norm{T}_p^p=\Tr\!\left((T^*T)^{p/2}\right).
\]
For $0<p<1$ this is the $p$th power of a quasi-norm; its direct-sum
additivity is unchanged.

\begin{corollary}\label{cor:schurweyl-norm}
Let $A_1,\ldots,A_s\in M_d(\C)$, $t_1,\ldots,t_s\in\C$, and
$0<p<\infty$.  Then
\begin{equation}\label{eq:schurweyl-norm}
 \norm{\sum_{i=1}^s t_iA_i^{\otimes n}}_p^p
 =
 \sum_{\substack{\lambda\vdash n\\\ell(\lambda)\leq d}}
 f^\lambda
 \norm{\sum_{i=1}^s t_i\cS_\lambda(A_i)}_p^p.
\end{equation}
\end{corollary}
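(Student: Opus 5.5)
The plan is to apply Theorem~\ref{thm:universal-block} to each $A_i$ with the same unitary $W_n$. This is exactly what the statement ``$W_n$ does not depend on $A$'' provides. By linearity of conjugation we get
\[
  W_n X_n W_n^*
  =\sum_{i=1}^s t_i\bigoplus_{\lambda}\cS_\lambda(A_i)\otimes I_{f^\lambda}
  =\bigoplus_{\substack{\lambda\vdash n\\\ell(\lambda)\leq d}}
  \Bigl(\sum_{i=1}^s t_i\cS_\lambda(A_i)\Bigr)\otimes I_{f^\lambda}.
\]
The second equality holds because the direct sum is taken with respect to the fixed orthogonal decomposition of the target space. Block-diagonal operators with respect to it form an algebra, and $(\cdot)\otimes I_{f^\lambda}$ is linear.

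Next we use three elementary facts about the functional $T\mapsto\Tr((T^*T)^{p/2})$.

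First, it is unitarily invariant: $(W T W^*)^*(WTW^*)=W T^*T W^*$, and the continuous functional calculus commutes with unitary conjugation. Hence $\norm{WTW^*}_p^p=\norm{T}_p^p$.

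Second, it is additive on orthogonal direct sums. If $T=\bigoplus_\lambda T_\lambda$ with respect to an orthogonal decomposition, then $T^*T=\bigoplus T_\lambda^*T_\lambda$, and the functional calculus acts blockwise. Taking traces gives $\norm{T}_p^p=\sum_\lambda\norm{T_\lambda}_p^p$.

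Third, $\norm{B\otimes I_f}_p^p=f\norm{B}_p^p$. This follows since $B\otimes I_f$ is unitarily equivalent to an $f$-fold orthogonal direct sum of copies of $B$, so it is a special case of the second fact. Equivalently, $(B\otimes I_f)^*(B\otimes I_f)=B^*B\otimes I_f$ and $\Tr(C\otimes I_f)=f\Tr C$.

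Combining these three facts with the displayed identity yields \eqref{eq:schurweyl-norm}.

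There is no serious obstacle. The only point requiring care is that the Hilbert-space structures on the Schur modules are those supplied by Theorem~\ref{thm:universal-block}. The right-hand norms $\norm{\sum_i t_i\cS_\lambda(A_i)}_p$ must therefore be computed with respect to those inner products (or any orthonormal basis thereof), so that the direct sum is genuinely orthogonal. We also note that nothing beyond positivity of $T^*T$ is used, so the argument is valid for $0<p<1$ and for singular or nonnormal $A_i$ alike.
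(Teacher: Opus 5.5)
Your proof is correct and matches the paper's argument. It applies Theorem~\ref{thm:universal-block} with the single $A$-independent unitary $W_n$ to every $A_i$, takes the linear combination, and then uses direct-sum additivity together with $\norm{T\otimes I_m}_p^p=m\norm{T}_p^p$. You also make explicit two points the paper leaves implicit: the unitary invariance of $\norm{\cdot}_p^p$, and that the right-hand norms are computed in the inner products supplied by the theorem.
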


\begin{proof}
Apply \eqref{eq:universal-block} to every $A_i$, take the same linear
combination, and use
\[
  \norm{T\otimes I_m}_p^p=m\norm{T}_p^p
  \quad\text{and}\quad
  \norm{T\oplus R}_p^p=\norm{T}_p^p+\norm{R}_p^p.
\]
\end{proof}

\begin{remark}[The operator norm]
For $p=\infty$, an actual orthogonal decomposition still gives
\[
 \norm{\bigoplus_j T_j}_\infty=\max_j\norm{T_j}_\infty.
\]
The virtual-representation step below, however, relies on additivity and
does not extend to the maximum.  We therefore restrict the virtual
formula and its algorithmic consequences to $0<p<\infty$.
\end{remark}

\section{Additive spectral functionals and virtual representations}

We now explain precisely how a representation-ring identity can be used
without constructing an irreducible representation.

Fix $A_1,\ldots,A_s\in M_d(\C)$, coefficients
$t_1,\ldots,t_s\in\C$, and $0<p<\infty$.  If $\rho$ is a homogeneous
polynomial representation of $\GL(d,\C)$, choose a Hermitian inner
product for which $\rho|_{\U(d)}$ is unitary and define
\begin{equation}\label{eq:phi-definition}
   \Phi_p(\rho)
   =
   \norm{\sum_{i=1}^s t_i\rho(A_i)}_p^p.
\end{equation}
The polynomial map $\rho(A)$ is defined at singular $A$ as well.

\begin{proposition}\label{prop:additive-functional}
The quantity $\Phi_p(\rho)$ depends only on the isomorphism class of
$\rho$ and is additive:
\[
  \Phi_p(\rho\oplus\tau)=\Phi_p(\rho)+\Phi_p(\tau).
\]
Consequently, it extends uniquely to a group homomorphism
\[
  \Phi_p:K_0(\Rep_{\mathrm{poly}}\GL(d,\C))\longrightarrow\mathbb{R},
\]
from the Grothendieck group of finite-dimensional polynomial
representations.
\end{proposition}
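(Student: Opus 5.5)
The plan has three parts: show that $\Phi_p(\rho)$ is well defined, show that it is additive, and then invoke the universal property of the Grothendieck group.

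\emph{Well-definedness.} First we check independence of the chosen inner product. Fix $\rho$ on a space $V$, and let $\langle\cdot,\cdot\rangle_1,\langle\cdot,\cdot\rangle_2$ be two Hermitian inner products for which $\rho|_{\U(d)}$ is unitary. Decompose $V$ into isotypic components. Since homogeneous polynomial representations are completely reducible, and the restriction to $\U(d)$ of each irreducible polynomial representation is irreducible and determines it (highest weights, including the determinant twists), these components are the same for both inner products. Schur's lemma then lets us choose, for each inner product, an orthonormal decomposition $V\cong\bigoplus_\mu \cS_\mu(\C^d)\otimes\C^{m_\mu}$ that is unitary for that inner product and intertwines $\U(d)$. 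Here each $\cS_\mu$ carries a fixed invariant inner product, unique up to scalar. The composition of the two identifications is a unitary $\U(d)$-intertwiner between the two model spaces, hence of the form $\bigoplus_\mu I\otimes u_\mu$ with $u_\mu$ unitary.

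Next we extend from $\U(d)$ to all of $M_d(\C)$, using the same argument as in Theorem~\ref{thm:universal-block}. Differentiate, complexify to $\mathfrak{gl}_d(\C)$, use connectedness of $\GL(d,\C)$, and then use polynomial density in $M_d(\C)$. This shows that a single unitary $W$ (from $(V,\langle\cdot,\cdot\rangle_1)$ to $(V,\langle\cdot,\cdot\rangle_2)$, or more generally between two isomorphic representations with invariant inner products) satisfies $W\rho(A)W^*=\rho'(A)$ for every $A\in M_d(\C)$, not just for unitary $A$. Taking the same linear combination gives $W\bigl(\sum_i t_i\rho(A_i)\bigr)W^*=\sum_i t_i\rho'(A_i)$. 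Since Schatten quasi-norms are unitarily invariant (singular values are unchanged), $\Phi_p(\rho)=\Phi_p(\rho')$. The same argument covers isomorphic $\rho\cong\rho'$ on different spaces.

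An alternative here is a direct lemma. If $T:V\to V'$ is a $\GL$-isomorphism, polar decomposition gives $T=W|T|$, and $|T|$ commutes with $\rho(\U(d))$ because both inner products are invariant. The same Lie-algebra/density extension then shows that $W$ intertwines $\rho(A)$ for all $A$. This is cleaner, and it is the route I would write up.

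\emph{Additivity.} For $\rho\oplus\tau$, we may, by well-definedness, choose the orthogonal direct sum of invariant inner products. Then $\sum_i t_i(\rho\oplus\tau)(A_i)$ is block diagonal, and $\norm{T\oplus R}_p^p=\norm{T}_p^p+\norm{R}_p^p$, exactly as in Corollary~\ref{cor:schurweyl-norm}.

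\emph{Extension to $K_0$.} An additive function on isomorphism classes that is constant on isomorphism classes extends uniquely to a group homomorphism. Its value on $[\rho]-[\tau]$ is $\Phi_p(\rho)-\Phi_p(\tau)$, and this is well defined because the category is semisimple, so $K_0$ is the group completion of the monoid of isomorphism classes under $\oplus$. Uniqueness is immediate because the classes $[\rho]$ generate $K_0$.

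\emph{Main obstacle.} The main obstacle is the well-definedness step. An isomorphism of $\GL$-representations need not be unitary, and unitary invariance applies only to unitary conjugation, so we must show that some unitary intertwiner exists for all of $M_d(\C)$, including singular and nonnormal matrices. The polar-decomposition argument, combined with the extension technique of Theorem~\ref{thm:universal-block}, handles this.
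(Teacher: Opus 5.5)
Your proposal is correct and follows essentially the same route as the paper: a unitary $\U(d)$-intertwiner between isomorphic representations, extended to all of $M_d(\C)$ by the complexification and polynomial-density argument of Theorem~\ref{thm:universal-block}, followed by block-diagonal additivity and the universal property of $K_0$. Your polar-decomposition step $T=W\lvert T\rvert$ and your remark that semisimplicity is what lets direct-sum additivity suffice for $K_0$ spell out what the paper asserts in one line. In the alternative isotypic route, only the composed map $(V,\langle\cdot,\cdot\rangle_1)\to(V,\langle\cdot,\cdot\rangle_2)$ is unitary, not the induced map of the model space to itself, but the former is all you actually use.
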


\begin{proof}
Two isomorphic polynomial representations restrict to equivalent
unitary representations of $\U(d)$.  Their equivalence can therefore
be implemented by a unitary intertwiner.  By the same complexification
and polynomial-continuation argument as in
Theorem~\ref{thm:universal-block}, that intertwiner works for every
$A\in M_d(\C)$.  Hence the matrices in \eqref{eq:phi-definition} are
unitarily similar and have the same singular values.

For a direct sum,
\[
 \sum_i t_i(\rho\oplus\tau)(A_i)
 =
 \left(\sum_i t_i\rho(A_i)\right)
 \oplus
 \left(\sum_i t_i\tau(A_i)\right),
\]
so additivity follows from direct-sum additivity of the $p$th power of
the Schatten functional.  The universal property of the Grothendieck
group gives the extension.
\end{proof}

\begin{remark}
If $[\rho]=[\rho_+]-[\rho_-]$ in the representation ring, then
\[
  \Phi_p(\rho)=\Phi_p(\rho_+)-\Phi_p(\rho_-).
\]
This statement concerns the value of an additive functional.  It does
not assert that $\rho(A)$ is obtained by deleting a visible matrix block
from $\rho_+(A)$ in the bases used for computation.
\end{remark}

\subsection{Multiplicative invariants}

The representation-ring method is not limited to additive functionals.
For a polynomial representation $\rho$, define
\begin{equation}\label{eq:delta-definition}
  \Delta(\rho)
  =
  \det\!\left(\sum_{i=1}^s t_i\rho(A_i)\right).
\end{equation}
Unlike $\Phi_p$, this functional is multiplicative under direct sums:
\[
  \Delta(\rho\oplus\tau)=\Delta(\rho)\Delta(\tau).
\]
Thus a virtual coefficient becomes an exponent rather than a scalar
coefficient.

\begin{proposition}\label{prop:multiplicative-functional}
Suppose that
\begin{equation}\label{eq:general-virtual-identity}
  [\rho]=\sum_{j=1}^q c_j[\rho_j],
  \qquad c_j\in\mathbb{Z},
\end{equation}
in the Grothendieck group of polynomial representations.  Then
\begin{equation}\label{eq:det-cross-multiplied}
  \Delta(\rho)
  \prod_{c_j<0}\Delta(\rho_j)^{-c_j}
  =
  \prod_{c_j>0}\Delta(\rho_j)^{c_j}.
\end{equation}
In particular, whenever all factors with $c_j<0$ are nonzero,
\begin{equation}\label{eq:det-quotient}
  \Delta(\rho)=\prod_{j=1}^q\Delta(\rho_j)^{c_j}.
\end{equation}
The same statements hold with $\Delta(\rho)$ replaced by the
characteristic polynomial
\[
  \det\!\left(zI-\sum_i t_i\rho(A_i)\right).
\]
\end{proposition}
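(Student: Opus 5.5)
The plan is to mirror the proof of Proposition~\ref{prop:additive-functional}, replacing additivity by multiplicativity. The argument has three steps.

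\emph{Step 1: $\Delta$ is an isomorphism invariant.} If $\rho\cong\rho'$, the restrictions to $\U(d)$ are equivalent, so they are intertwined by some invertible (indeed unitary) $S$. The complexification and polynomial-continuation argument of Theorem~\ref{thm:universal-block} gives $S\rho(A)S^{-1}=\rho'(A)$ for every $A\in M_d(\C)$. Hence
\[
  S\Bigl(\sum_i t_i\rho(A_i)\Bigr)S^{-1}=\sum_i t_i\rho'(A_i),
\]
and the two determinants agree. For determinants, unitarity of $S$ is not even needed: any similarity suffices. The same holds for characteristic polynomials, since similarity also conjugates $zI-\sum_i t_i\rho(A_i)$.

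\emph{Step 2: multiplicativity on direct sums.} For $\rho\oplus\tau$, the matrix $\sum_i t_i(\rho\oplus\tau)(A_i)$ is block diagonal, so $\Delta(\rho\oplus\tau)=\Delta(\rho)\Delta(\tau)$. The same block structure gives the identity for characteristic polynomials with $zI$ split accordingly.

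\emph{Step 3: pass from the virtual identity to an actual isomorphism.} We cannot define $\Delta$ on the Grothendieck group itself as a homomorphism into $(\C,\cdot)$, because values may vanish and $\C$ is not a group under multiplication. Instead, move the negative terms to the other side of \eqref{eq:general-virtual-identity}:
\[
  [\rho]+\sum_{c_j<0}(-c_j)[\rho_j]=\sum_{c_j>0}c_j[\rho_j].
\]
Both sides are now classes of honest representations,
\[
  \rho\oplus\bigoplus_{c_j<0}\rho_j^{\oplus(-c_j)}
  \quad\text{and}\quad
  \bigoplus_{c_j>0}\rho_j^{\oplus c_j}.
\]

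The main point to justify is that equality of classes in $K_0$ of actual representations implies isomorphism. A priori, equality in the Grothendieck group only gives stable isomorphism: $V\oplus Y\cong W\oplus Y$ for some $Y$. Polynomial representations are completely reducible, which one sees by restricting to $\U(d)$ and then continuing as in Step~1. Therefore, by Krull--Schmidt (comparing multiplicities of irreducibles), we may cancel $Y$ and conclude $V\cong W$.

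Applying Steps~1 and~2 to this isomorphism yields \eqref{eq:det-cross-multiplied}. If every $\Delta(\rho_j)$ with $c_j<0$ is nonzero, dividing by these factors gives \eqref{eq:det-quotient}.

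For the characteristic polynomial, the same argument runs verbatim in the ring $\C[z]$. Since $\C[z]$ is an integral domain and characteristic polynomials are monic, hence nonzero, the division step always works there, and the quotient formula holds with no hypothesis.
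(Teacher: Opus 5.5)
Your proof is correct. It differs from the paper's only in how the auxiliary representation is removed.

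\textbf{The paper's route.} The paper keeps the auxiliary representation $Y$ from $V\oplus Y\cong W\oplus Y$. It treats the entries of the $A_i$ and the $t_i$ as indeterminates, so that $\Delta(V)\Delta(Y)=\Delta(W)\Delta(Y)$ is an identity of polynomials. It notes that $\Delta(Y)$ is not the zero polynomial: specializing to $t_1=1$, $A_1=I$ and all other $t_i=0$ gives $\det I=1$. It then cancels $\Delta(Y)$ in that integral domain and only afterwards specializes to numerical inputs. This is exactly how the paper sidesteps the obstruction you point out in Step~3. With indeterminate entries every $\Delta$ is a nonzero polynomial, so $\Delta$ does extend to $K_0$ as a homomorphism into the multiplicative group of the fraction field.

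\textbf{Your route.} You cancel $Y$ at the level of representations instead. Complete reducibility (via the unitary trick) and comparison of irreducible multiplicities give an honest isomorphism $V\cong W$. The two block-diagonal matrices are then similar for each fixed numerical input, and you never leave $\C$.

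\textbf{Trade-offs.} The paper's argument uses nothing about the category beyond the stable-isomorphism description of $K_0$. It therefore works for any multiplicative polynomial invariant that is not identically zero on $Y$, at the cost of a generic-polynomial argument. Your argument invokes semisimplicity and Krull--Schmidt. In return it is pointwise and yields the stronger conclusion that the two sides are actually (even unitarily) similar. It also shows that, for polynomial representations, the paper's ``possibly after adding an auxiliary representation'' is never actually needed.

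Your final remark is also correct: for characteristic polynomials the quotient form needs no hypothesis, because monic polynomials are nonzero. This holds provided the identity is read in $\C(z)$ rather than at a fixed numerical value of $z$.
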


\begin{proof}
Separate the positive and negative parts of
\eqref{eq:general-virtual-identity}.  The definition of the Grothendieck
group gives an isomorphism
\[
  \rho\oplus
  \bigoplus_{c_j<0}\rho_j^{\oplus(-c_j)}
  \cong
  \bigoplus_{c_j>0}\rho_j^{\oplus c_j},
\]
possibly after adding the same auxiliary representation to both sides.
Apply the determinant to the corresponding direct-sum matrices, treating
the entries of the $A_i$ and the $t_i$ as indeterminates.  The auxiliary
factor is not the zero polynomial---set $t_1=1$, $A_1=I$, and all other
$t_i=0$---so it cancels in the polynomial ring.  This gives
\eqref{eq:det-cross-multiplied}.  Division gives
\eqref{eq:det-quotient} under the stated nonvanishing condition.  The
argument with $zI$ is identical.
\end{proof}

\begin{remark}
The integrality of the coefficients in
\eqref{eq:general-virtual-identity} is exactly what makes negative
coefficients harmless for a multiplicative invariant: they become
negative integer exponents.  At exceptional inputs a direct quotient may
have the indeterminate form $0/0$.  The cross-multiplied identity remains
valid there; numerical evaluation can use a small generic perturbation
and a limit, while symbolic evaluation can cancel common polynomial
factors before substitution.
\end{remark}

\section{Jacobi--Trudi reduction to symmetric powers}

Let $\lambda=(\lambda_1,\ldots,\lambda_d)$ be a partition.  Put
\[
  m=\lambda_d,
  \qquad
  \mu=\lambda-m(1,\ldots,1).
\]
Then $\mu_d=0$, so $r=\ell(\mu)\leq d-1$, and
\begin{equation}\label{eq:det-twist}
   \cS_\lambda(A)=\det(A)^m\cS_\mu(A)
   \qquad (A\in M_d(\C)).
\end{equation}
The identity remains meaningful for singular $A$: both sides are
polynomial functions of its entries.

For $q\geq0$, write $H_q=[\Sym^q(\C^d)]$ in the representation ring,
put $H_0=1$, and put $H_q=0$ for $q<0$.  The Jacobi--Trudi identity
\cite[Ch.~I]{Macdonald} gives
\begin{equation}\label{eq:JT}
   [\cS_\mu(\C^d)]
   =
   \det\!\left(H_{\mu_i-i+j}\right)_{i,j=1}^r.
\end{equation}
Expanding the determinant yields
\begin{equation}\label{eq:JT-expanded}
 [\cS_\mu(\C^d)]
 =
 \sum_{\sigma\in\Sd_r}\sgn(\sigma)
 \left[
   \bigotimes_{j=1}^r
   \Sym^{\,\mu_j-j+\sigma(j)}(\C^d)
 \right],
\end{equation}
where a tensor product containing a negative symmetric power is
interpreted as zero.

For $\sigma\in\Sd_r$, define the polynomial representation
\begin{equation}\label{eq:B-def}
  B_{\lambda,\sigma}(A)
  =
  \det(A)^m
  \bigotimes_{j=1}^r
  \Sym^{\,\mu_j-j+\sigma(j)}(A),
\end{equation}
again setting it equal to zero if one of the exponents is negative.
When $\mu=0$, the empty tensor product is the one-dimensional trivial
representation.

\begin{theorem}[Master formula]\label{thm:master-formula}
Let $A_1,\ldots,A_s\in M_d(\C)$, let
$t_1,\ldots,t_s\in\C$, let $n\geq1$, and let $0<p<\infty$.  Then
\begin{align}
 &\norm{\sum_{i=1}^s t_i A_i^{\otimes n}}_p^p \notag\\
 &\quad =
 \sum_{\substack{\lambda\vdash n\\\ell(\lambda)\leq d}}
 f^\lambda
 \sum_{\sigma\in\Sd_{\ell(\mu)}}
 \sgn(\sigma)
 \norm{\sum_{i=1}^s t_i B_{\lambda,\sigma}(A_i)}_p^p,
 \label{eq:master-formula}
\end{align}
where $\mu=\lambda-\lambda_d(1,\ldots,1)$ and
$B_{\lambda,\sigma}$ is defined by \eqref{eq:B-def}.
\end{theorem}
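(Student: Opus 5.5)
The proof combines three earlier results: Corollary~\ref{cor:schurweyl-norm}, the determinant twist \eqref{eq:det-twist} lifted to the representation ring, and the additivity of $\Phi_p$ in Proposition~\ref{prop:additive-functional}. First, Corollary~\ref{cor:schurweyl-norm} rewrites the left-hand side as $\sum_\lambda f^\lambda \Phi_p(\cS_\lambda)$. Here $\Phi_p$ is the functional \eqref{eq:phi-definition}, built with the fixed data $A_i,t_i,p$. The norm appearing in the corollary is computed in the Hilbert structure supplied by Theorem~\ref{thm:universal-block}. That structure is $\U(d)$-invariant, so by the first part of Proposition~\ref{prop:additive-functional} it gives the same value as any other invariant inner product. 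It therefore suffices to prove, for each $\lambda$ with $\ell(\lambda)\le d$,
\[
  \Phi_p(\cS_\lambda)=\sum_{\sigma\in\Sd_{r}}\sgn(\sigma)\,\Phi_p(B_{\lambda,\sigma}),\qquad r=\ell(\mu).
\]

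Second, I lift \eqref{eq:det-twist} to an identity of representation classes:
\[
  [\cS_\lambda(\C^d)]=[{\det}^m]\cdot[\cS_\mu(\C^d)].
\]
Both sides are irreducible polynomial representations with the same highest weight $\lambda=\mu+m(1,\dots,1)$, or equivalently the same character $s_\lambda=(x_1\cdots x_d)^m s_\mu$. Characters determine polynomial representations up to isomorphism, so the two classes agree.

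Third, I multiply the expanded Jacobi--Trudi identity \eqref{eq:JT-expanded} by $[\det^m]$. Multiplication in the representation ring is induced by the tensor product, which distributes over the formal sum. This gives
\[
  [\cS_\lambda]=\sum_\sigma \sgn(\sigma)\,[B_{\lambda,\sigma}],
\]
where $B_{\lambda,\sigma}$ is the representation of \eqref{eq:B-def}, namely $\det^m$ tensored with the given symmetric powers. Terms with a negative exponent are zero both in the ring and by the convention in \eqref{eq:B-def}, so they drop out consistently. When $\mu=0$ the sum has the single term $\det^m$, which matches the empty-product convention. Applying the group homomorphism $\Phi_p$ from Proposition~\ref{prop:additive-functional} to this identity yields the per-$\lambda$ equation. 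Summing over $\lambda$ with weights $f^\lambda$ gives \eqref{eq:master-formula}.

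The main obstacle is checking that $\Phi_p(B_{\lambda,\sigma})$, as defined abstractly through an invariant inner product, equals the concrete norm $\norm{\sum_i t_iB_{\lambda,\sigma}(A_i)}_p^p$ computed in the usual tensor-product Hilbert structure. The plan is to equip each $\Sym^k(\C^d)$ with the inner product inherited from $(\C^d)^{\otimes k}$ and take the tensor-product inner product on $B_{\lambda,\sigma}$. This structure is $\U(d)$-invariant, and the scalar factor $\det(A)^m$ is harmless. Proposition~\ref{prop:additive-functional} then says the resulting quantity is independent of the choice of invariant inner product, so the two values agree. With that settled, the remaining steps are formal.
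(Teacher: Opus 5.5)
Your proposal is correct and follows the paper's proof. For each $\lambda$ you combine the determinant twist \eqref{eq:det-twist}, the expanded Jacobi--Trudi identity \eqref{eq:JT-expanded} and the additivity of $\Phi_p$ from Proposition~\ref{prop:additive-functional}, and then substitute the result into Corollary~\ref{cor:schurweyl-norm}. Beyond the paper's proof, you make two points explicit that it leaves implicit: you lift the twist to an identity of classes via characters, and you check that both the Hilbert structure of Theorem~\ref{thm:universal-block} and the tensor-product inner product on $B_{\lambda,\sigma}$ are $\U(d)$-invariant, so the concrete norms equal the abstract values of $\Phi_p$. These are useful elaborations rather than a different route.
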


\begin{proof}
For a fixed partition $\lambda$, combine the determinant twist
\eqref{eq:det-twist}, the Jacobi--Trudi identity
\eqref{eq:JT-expanded}, and Proposition
\ref{prop:additive-functional}.  This gives
\[
 \norm{\sum_i t_i\cS_\lambda(A_i)}_p^p
 =
 \sum_{\sigma\in\Sd_{\ell(\mu)}}\sgn(\sigma)
 \norm{\sum_i t_i B_{\lambda,\sigma}(A_i)}_p^p.
\]
Substitution in \eqref{eq:schurweyl-norm} proves the result.
\end{proof}

\begin{remark}
Formula \eqref{eq:master-formula} is an exact algebraic reduction.
When its blocks are evaluated in floating-point arithmetic, the final
answer is of course a numerical approximation.  In particular, signed
cancellation in the inner sum must be monitored; see
Section~\ref{sec:numerics}.
\end{remark}

\begin{corollary}[Determinants from the master decomposition]
\label{cor:det-master}
Put
\[
  \Delta_\lambda
  =
  \det\!\left(\sum_i t_i\cS_\lambda(A_i)\right),
  \qquad
  \Delta_{\lambda,\sigma}
  =
  \det\!\left(\sum_i t_iB_{\lambda,\sigma}(A_i)\right).
\]
Then
\begin{equation}\label{eq:full-det-schur}
 \det\!\left(\sum_i t_iA_i^{\otimes n}\right)
 =
 \prod_{\substack{\lambda\vdash n\\\ell(\lambda)\leq d}}
 \Delta_\lambda^{f^\lambda},
\end{equation}
and each Schur factor satisfies
\begin{equation}\label{eq:schur-det-cross}
  \Delta_\lambda
  \prod_{\substack{\sigma\in\Sd_{\ell(\mu)}\\
                    \sgn(\sigma)=-1}}
  \Delta_{\lambda,\sigma}
  =
  \prod_{\substack{\sigma\in\Sd_{\ell(\mu)}\\
                    \sgn(\sigma)=1}}
  \Delta_{\lambda,\sigma}.
\end{equation}
For generic input matrices, the factors on the left are nonzero and
\eqref{eq:schur-det-cross} gives $\Delta_\lambda$ as a quotient.
\end{corollary}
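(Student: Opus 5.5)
The plan is to derive \eqref{eq:full-det-schur} from Theorem~\ref{thm:universal-block} and \eqref{eq:schur-det-cross} from Proposition~\ref{prop:multiplicative-functional}.

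For \eqref{eq:full-det-schur}, apply \eqref{eq:universal-block} to each $A_i$ with the single unitary $W_n$, which does not depend on $A$. Taking the linear combination gives
\[
 W_n\Bigl(\sum_i t_iA_i^{\otimes n}\Bigr)W_n^*
 =\bigoplus_{\lambda}\Bigl(\sum_i t_i\cS_\lambda(A_i)\Bigr)\otimes I_{f^\lambda}.
\]
The determinant is invariant under unitary conjugation and multiplicative over direct sums. Moreover $\det(T\otimes I_m)=\det(T)^m$, because $T\otimes I_m$ is unitarily (indeed permutation) similar to $T^{\oplus m}$. Together these give the product formula.

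For \eqref{eq:schur-det-cross}, fix $\lambda$ and write $\mu$ and $r=\ell(\mu)$ as in the theorem. The twist \eqref{eq:det-twist} together with \eqref{eq:JT-expanded} gives, in the Grothendieck group of polynomial representations,
\[
[\cS_\lambda]=\sum_{\sigma\in\Sd_r}\sgn(\sigma)[B_{\lambda,\sigma}].
\]
The determinant twist commutes with the Grothendieck-group identity because tensoring with the one-dimensional representation $\det^m$ is additive. Terms with a negative exponent are zero representations. Their $\Delta$ is the determinant of a $0\times0$ matrix, namely $1$, so including or omitting them on either side is harmless. Distinct $\sigma$ may give isomorphic $B_{\lambda,\sigma}$, but each still contributes with coefficient $\pm1$. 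Proposition~\ref{prop:multiplicative-functional} (cross-multiplied form) then applies directly, with $c_\sigma=\sgn(\sigma)$, and yields exactly \eqref{eq:schur-det-cross}.

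The remaining claim concerns genericity. Each $\Delta_{\lambda,\sigma}$ is a polynomial in the entries of the $A_i$ and in the $t_i$. It does not vanish identically, because at $t_1=1$, $A_1=I$, $t_i=0$ for $i>1$ its value is $1$. A nonzero polynomial is nonzero on a dense Zariski-open set, and a finite intersection of such sets is still Zariski-open and dense. On that set the factors on the left with $\sgn(\sigma)=-1$ are nonzero, and dividing by them gives $\Delta_\lambda$ as the stated quotient.

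The only point needing care is the bookkeeping of zero or repeated terms in the Jacobi--Trudi expansion, and this is settled by the convention $\Delta(0)=1$. The rest is a direct application of the earlier results, so I expect no substantive obstacle.
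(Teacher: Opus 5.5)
Your proposal is correct and follows the paper's proof: \eqref{eq:full-det-schur} comes from the unitary block decomposition of Theorem~\ref{thm:universal-block}, and \eqref{eq:schur-det-cross} comes from the determinant-twisted Jacobi--Trudi identity inserted into Proposition~\ref{prop:multiplicative-functional}. You also supply details the paper leaves implicit, namely the convention $\Delta=1$ for zero-dimensional terms and the genericity argument. One small point: your genericity argument is joint in $(t,A)$. If ``generic'' is read with the $t_i$ held fixed and nonzero, evaluate instead at some $t_j\neq0$ with $A_j=I$ and $A_i=0$ for $i\neq j$; this works because $B_{\lambda,\sigma}$ is homogeneous of degree $n\geq1$.
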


\begin{proof}
Equation \eqref{eq:full-det-schur} follows from the actual orthogonal
direct sum in Theorem~\ref{thm:universal-block}.  Equation
\eqref{eq:schur-det-cross} follows from the Jacobi--Trudi identity and
Proposition~\ref{prop:multiplicative-functional}.
\end{proof}

\section{The explicit formulas for \texorpdfstring{\(d=2\) and \(d=3\)}
{d=2 and d=3}}

\subsection{Why the \texorpdfstring{\(2\times2\)}{2 by 2} case is exceptional}

For $d=2$, every $\lambda\vdash n$ has the form
$\lambda=(a+m,m)$ with $a,m\geq0$, and
\begin{equation}\label{eq:d2}
  \cS_{(a+m,m)}(A)
  =
  \det(A)^m\Sym^a(A).
\end{equation}
There is no nontrivial Jacobi--Trudi subtraction.  Thus symmetric powers
and determinant characters already give every block directly.  This
special feature motivated the earlier Mathematica implementation
\cite{WeinerNietertSoftware}, but it does not persist in dimension
three.

\subsection{The first virtual case:
\texorpdfstring{\(3\times3\)}{3 by 3} matrices}

Let $\lambda=(\lambda_1,\lambda_2,\lambda_3)\vdash n$ and set
\[
   m=\lambda_3,\qquad
   a=\lambda_1-\lambda_3,\qquad
   b=\lambda_2-\lambda_3.
\]
The $2\times2$ Jacobi--Trudi determinant gives
\begin{equation}\label{eq:d3-virtual}
 [\cS_\lambda]
 =
 [\det^m\otimes\Sym^a\otimes\Sym^b]
 -
 [\det^m\otimes\Sym^{a+1}\otimes\Sym^{b-1}],
\end{equation}
where the second term is zero when $b=0$.  Therefore
\begin{align}
 &\norm{\sum_{i=1}^s t_i\cS_\lambda(A_i)}_p^p
 \notag\\
 &=
 \norm{
   \sum_{i=1}^s
   t_i\det(A_i)^m
   \bigl(\Sym^a(A_i)\otimes\Sym^b(A_i)\bigr)
 }_p^p
 \notag\\
 &\quad -
 \norm{
   \sum_{i=1}^s
   t_i\det(A_i)^m
   \bigl(\Sym^{a+1}(A_i)\otimes\Sym^{b-1}(A_i)\bigr)
 }_p^p.
\label{eq:d3-functional}
\end{align}
Together with \eqref{eq:schurweyl-norm}, this is the formula used by the
$3\times3$ implementation.

The appearance of a difference in \eqref{eq:d3-functional} is not a
numerical heuristic.  By the Pieri rule, the first representation in
\eqref{eq:d3-virtual} contains $\cS_\lambda$ and the second
representation as orthogonal summands after restriction to $\U(3)$.
The difference of the additive spectral functionals is therefore exact.

\section{Explicit matrices for symmetric powers}
\label{sec:symmetric-matrices}

It remains to construct $\Sym^k(A)$ efficiently.  Let
\[
  \mathcal{I}_{d,k}
  =
  \left\{\alpha=(\alpha_1,\ldots,\alpha_d)\in\N^d:
  \abs{\alpha}:=\sum_{j=1}^d\alpha_j=k\right\}.
\]
The normalized occupation-number vectors indexed by
$\mathcal{I}_{d,k}$ form an orthonormal basis of
$\Sym^k(\C^d)$.  Its dimension is
\begin{equation}\label{eq:sym-dimension}
  D_{d,k}
  =
  \binom{k+d-1}{d-1}.
\end{equation}

For $\alpha,\beta\in\mathcal{I}_{d,k}$, let
$\mathcal{E}(\alpha,\beta)$ be the set of nonnegative integer matrices
$E=(E_{uv})_{u,v=1}^d$ whose row sums are $\alpha$ and whose column sums
are $\beta$.  Write
\[
  \alpha!=\prod_{u=1}^d\alpha_u!,
  \qquad
  \beta!=\prod_{v=1}^d\beta_v!,
  \qquad
  E!=\prod_{u,v=1}^d E_{uv}!.
\]

\begin{proposition}\label{prop:sym-entry}
In the normalized occupation-number basis,
\begin{equation}\label{eq:sym-entry}
 \bigl[\Sym^k(A)\bigr]_{\alpha,\beta}
 =
 \sqrt{\alpha!\,\beta!}
 \sum_{E\in\mathcal{E}(\alpha,\beta)}
 \frac{\displaystyle\prod_{u,v=1}^d A_{uv}^{E_{uv}}}{E!}.
\end{equation}
\end{proposition}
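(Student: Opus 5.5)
The plan is to realize $\Sym^k(\C^d)$ as the space of symmetric tensors inside $(\C^d)^{\otimes k}$, with $\Sym^k(A)$ the restriction of $A^{\otimes k}$. The normalized occupation-number vector for $\alpha\in\mathcal{I}_{d,k}$ is
\[
  e_\alpha=\sqrt{\frac{\alpha!}{k!}}\sum_{w\in W(\alpha)} e_{w_1}\otimes\cdots\otimes e_{w_k},
\]
where $W(\alpha)$ is the set of words in $\{1,\ldots,d\}^k$ in which the letter $u$ occurs exactly $\alpha_u$ times. Since $\abs{W(\alpha)}=k!/\alpha!$, the vector $e_\alpha$ has unit norm. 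Distinct $\alpha$ give disjoint word sets, so these vectors are orthonormal. This inner product is the restriction of the standard one on $(\C^d)^{\otimes k}$, which is $\U(d)$-invariant, so it is an admissible Hilbert structure in the sense of Section~2. The matrix entry we need is then
\[
  \bigl[\Sym^k(A)\bigr]_{\alpha,\beta}=\langle e_\alpha, A^{\otimes k}e_\beta\rangle .
\]

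Next I expand this inner product. For words $w$ and $w'$ we have $\langle e_w,A^{\otimes k}e_{w'}\rangle=\prod_{i=1}^k A_{w_i w'_i}$. Hence
\[
  \bigl[\Sym^k(A)\bigr]_{\alpha,\beta}=\frac{\sqrt{\alpha!\,\beta!}}{k!}\sum_{w\in W(\alpha)}\sum_{w'\in W(\beta)}\prod_{i=1}^k A_{w_iw'_i}.
\]

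The key step, and the only one requiring care, is a counting argument. Each pair $(w,w')$ determines a matrix $E$ with $E_{uv}=\#\{i: w_i=u,\ w'_i=v\}$. This $E$ has row sums $\alpha$ and column sums $\beta$, so $E\in\mathcal{E}(\alpha,\beta)$, and the summand equals $\prod_{u,v}A_{uv}^{E_{uv}}$. Conversely, the pairs $(w,w')$ with a given $E$ correspond exactly to words of length $k$ over the alphabet $\{1,\ldots,d\}^2$ in which the letter $(u,v)$ appears $E_{uv}$ times. There are $k!/E!$ such words, by the multinomial coefficient.

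Substituting this count, the factor $k!$ cancels and we obtain \eqref{eq:sym-entry}. I expect no real obstacle. The points to check are the normalization constant of $e_\alpha$ and that the fiber count is the multinomial $k!/E!$ rather than something involving $\alpha!$ or $\beta!$.
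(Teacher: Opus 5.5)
Your proposal is correct and matches the paper's proof. The paper expands $A^{\otimes k}$ between normalized symmetrized words, groups pairs of words by the contingency matrix $E$, counts $k!/E!$ pairs for each $E$, and cancels $k!$ against the two normalizations; you make the normalization constant $\sqrt{\alpha!/k!}$ and the multinomial fiber count explicit where the paper states them in one line.
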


\begin{proof}
Expand $A^{\otimes k}$ between the normalized symmetrizations of tensor
words with occupation vectors $\beta$ and $\alpha$.  A pair of words
contributes the monomial $\prod_{u,v}A_{uv}^{E_{uv}}$, where $E_{uv}$
counts positions at which the input letter is $v$ and the output letter
is $u$.  For fixed $E$, the number of such assignments is
$k!/E!$.  The two normalization factors cancel the $k!$ and leave
\eqref{eq:sym-entry}.
\end{proof}

For fixed $d$ and $k$, the sets of exponent arrays $E$ and the
coefficients $\sqrt{\alpha!\beta!}/E!$ do not depend on $A$.  They can
therefore be precomputed and stored sparsely.  Evaluating
$\Sym^k(A)$ then reduces to evaluating a fixed list of monomials in the
$d^2$ entries of $A$ and accumulating them into the prescribed matrix
positions.  The same precomputed table is reused for every matrix in a
numerical experiment.

\subsection{Sparse coefficient tables and online evaluation}
\label{sec:sparse-implementation}

Formula \eqref{eq:sym-entry} is suitable for direct implementation, but it is
wasteful to enumerate the contingency tables $E$ every time a new numerical
matrix is supplied.  The matrix-independent part can be extracted completely.
Let
\[
 \Gamma_{d,k}=\left\{\gamma\in\N^{d^2}:\abs{\gamma}=k\right\},
 \qquad
 M_{d,k}=\abs{\Gamma_{d,k}}=\binom{k+d^2-1}{d^2-1},
\]
and fix an ordering of $\Gamma_{d,k}$.  For $A=(A_{uv})$, define the monomial
vector $m_{d,k}(A)\in\C^{M_{d,k}}$ by
\[
  [m_{d,k}(A)]_\gamma=\prod_{u,v=1}^d A_{uv}^{\gamma_{uv}}.
\]
After flattening the symmetric-power matrix in a fixed row-major order,
Proposition~\ref{prop:sym-entry} gives a sparse matrix $C_{d,k}$, independent
of $A$, such that
\begin{equation}\label{eq:sparse-evaluation}
  \vecop\!\left(\Sym^k(A)\right)=C_{d,k}m_{d,k}(A).
\end{equation}
The rows of $C_{d,k}$ are indexed by pairs $(\alpha,\beta)$ of occupation
vectors, and a nonzero coefficient in column $\gamma$ is obtained by summing
$\sqrt{\alpha!\beta!}/E!$ over the exponent matrices $E$ with flattened
exponent vector $\gamma$, row sums $\alpha$, and column sums $\beta$.

The implementation stores the ordered exponent array and $C_{d,k}$ in sparse
compressed form.  At run time, all monomials are evaluated by vectorized
power and product operations, followed by one sparse matrix--vector product
and a reshape.  Thus symbolic expressions are absent from the online phase.
For a fixed query $A_1,\ldots,A_s$, each matrix $\Sym^k(A_i)$ is cached the
first time degree $k$ is requested and is then reused across all partitions
and Jacobi--Trudi terms.

For $d=3$, one can also aggregate identical virtual blocks before any dense
matrix is formed.  For
$\lambda=(a+m,b+m,m)$, formula \eqref{eq:d3-functional} contributes the key
$(m,a,b)$ with integer weight $f^\lambda$ and, when $b>0$, the key
$(m,a+1,b-1)$ with weight $-f^\lambda$.  Let $c_{m,u,v}$ be the total weight
after summing over all partitions of $n$.  Then
\begin{align}
 \norm{X_n}_p^p
 =\sum_{(m,u,v)} c_{m,u,v}
 \norm{\sum_{i=1}^s t_i\det(A_i)^m
       \bigl(\Sym^u(A_i)\otimes\Sym^v(A_i)\bigr)}_p^p.
\label{eq:d3-aggregated}
\end{align}
Zero coefficients are discarded.  This aggregation is exact integer
arithmetic and avoids repeated evaluation of coincident positive and negative
terms.

The online $3\times3$ algorithm can therefore be summarized as follows.

\begin{quote}
\small
\textbf{Algorithm 1 (weighted Schatten functional for $d=3$).}
Given $n,p$, matrices $A_i$, and coefficients $t_i$:
\begin{enumerate}[leftmargin=1.8em]
 \item enumerate the partitions $\lambda\vdash n$ with at most three parts,
       compute $f^\lambda$, and assemble the integer dictionary
       $(m,u,v)\mapsto c_{m,u,v}$;
 \item determine all symmetric degrees required by the nonzero keys, evaluate
       and cache $\Sym^k(A_i)$ using \eqref{eq:sparse-evaluation};
 \item for each nonzero key, form the weighted dense block in
       \eqref{eq:d3-aggregated}, compute its singular values, and add
       $c_{m,u,v}\sum_j s_j^p$ to the accumulator;
 \item take the $p$th root after checking the cancellation diagnostics in
       Section~\ref{sec:numerics}.
\end{enumerate}
\end{quote}

The matrix-independent tables are an offline resource.  Their generation,
loading, and caching must be distinguished from repeated online evaluation in
any meaningful benchmark.  The public implementation follows this separation
and exposes a class-level interface for block decompositions and weighted
Schatten norms \cite{TensorpowSoftware}.

\section{Algorithmic complexity}
\label{sec:complexity}

For clarity, the general-$d$ evaluation of \eqref{eq:master-formula} consists
of the following steps.

\begin{enumerate}[leftmargin=2.2em]
  \item Enumerate the partitions $\lambda\vdash n$ with
  $\ell(\lambda)\leq d$ and compute $f^\lambda$ from
  \eqref{eq:specht-dimension}.

  \item Remove the last part $m=\lambda_d$ and form
  $\mu=\lambda-m(1,\ldots,1)$.

  \item For each nonzero Jacobi--Trudi term
  $\sigma\in\Sd_{\ell(\mu)}$, obtain the required symmetric powers from the
  cache, take their Kronecker products, and multiply by $\det(A_i)^m$.

  \item Form $\sum_i t_iB_{\lambda,\sigma}(A_i)$, compute its singular
  values, and add the signed $p$th powers with weight $f^\lambda$.

  \item Aggregate the contributions and take the $p$th root.
\end{enumerate}

We give a deliberately conservative dense-arithmetic bound for the online
phase.  The number of partitions of $n$ with at most $d$ parts is
$O(n^{d-1})$.  After the determinant twist, each Jacobi--Trudi term has at
most $d-1$ symmetric factors, and there are at most $(d-1)!$ such terms.  For
fixed $d$ and degrees $O(n)$,
\[
  \dim\Sym^k(\C^d)=\binom{k+d-1}{d-1}=O(n^{d-1}).
\]
Consequently, the largest dense matrix in a Jacobi--Trudi term has order
\begin{equation}\label{eq:max-block-size}
  N_{\max}=O\!\left(n^{(d-1)^2}\right).
\end{equation}

\begin{theorem}\label{thm:complexity}
For fixed $d$, fixed $s$, and $0<p<\infty$, a direct dense implementation of
\eqref{eq:master-formula} has worst-case online arithmetic complexity
\begin{equation}\label{eq:general-complexity}
  O\!\left(n^{3(d-1)^2+d-1}\right)
\end{equation}
and peak dense-matrix storage
\begin{equation}\label{eq:general-memory}
  O\!\left(n^{2(d-1)^2}\right),
\end{equation}
apart from the matrix-independent sparse coefficient tables.
\end{theorem}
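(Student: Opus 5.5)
The plan is to count the online work term by term, following the five steps listed before the theorem. There are $O(n^{d-1})$ partitions with at most $d$ parts. For each partition there are at most $(d-1)!=O(1)$ Jacobi--Trudi terms. The dominant cost per term will be one dense singular value decomposition of a matrix of order at most $N_{\max}$. Multiplying gives $O(n^{d-1})\cdot O(N_{\max}^3)=O(n^{3(d-1)^2+d-1})$, provided every other step fits under this bound.

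First I will justify \eqref{eq:max-block-size}. In $B_{\lambda,\sigma}$ there are $r\le d-1$ tensor factors $\Sym^{k_j}(\C^d)$. Each exponent $k_j=\mu_j-j+\sigma(j)$ satisfies $0\le k_j\le n+d$, so by \eqref{eq:sym-dimension} each factor has dimension $O(n^{d-1})$ for fixed $d$. The Kronecker product therefore has order $N\le\prod_j D_{d,k_j}=O(n^{(d-1)^2})$.

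Next I will bound each per-term operation by $O(N^3)$.
\begin{itemize}
\item Computing $f^\lambda$ from \eqref{eq:specht-dimension} costs $O(1)$ arithmetic operations for fixed $d$, treating factorials as precomputed. This is negligible.
\item Each $\det(A_i)^m$ costs $O(\log n)$ operations.
\item Forming the Kronecker products for $s$ matrices costs $O(sN^2)$.
\item Forming the linear combination costs $O(sN^2)$.
\item A dense SVD, or an eigendecomposition of $T^*T$ followed by $\sum_j s_j^p$, costs $O(N^3)$ in the standard arithmetic model for fixed precision.
\end{itemize}

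The cached symmetric powers must also be computed once. They are needed only for degrees $k\le n+d$, so there are $O(n)$ of them, for each of the $s$ matrices. By \eqref{eq:sparse-evaluation}, each one costs about $M_{d,k}+\mathrm{nnz}(C_{d,k})$. Here $M_{d,k}=O(n^{d^2-1})$. The number of nonzeros is at most $D_{d,k}^2\cdot M_{d,k}$, which is crude. A better bound is the number of contingency tables $E$ summed over $(\alpha,\beta)$, which equals $M_{d,k}$, since every $E$ determines its own margins $\alpha,\beta$.

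This step is where I expect the main obstacle: I must check that $O(n)\cdot n^{d^2-1}=O(n^{d^2})$ is at most $n^{3(d-1)^2+d-1}$. For $d\ge2$ we have $3(d-1)^2+d-1-d^2=2d^2-5d+2=(2d-1)(d-2)\ge0$, so the bound holds, with equality of exponents at $d=2$. Counting only nonzeros, rather than treating $C_{d,k}$ as dense, is therefore essential. I would state this explicitly as part of the sparse table assumption.

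For storage, only one term's block needs to exist at a time, together with the SVD workspace, which is $O(N^2)=O(n^{2(d-1)^2})$. The cached dense matrices $\Sym^k(A_i)$ occupy $\sum_{k\le n+d}D_{d,k}^2=O(n^{2d-1})$. This is $\le n^{2(d-1)^2}$ whenever $2d-1\le 2(d-1)^2$, which holds for $d\ge 3$; for $d=2$ both sides are $n^{3}$ versus $n^{2}$. So for $d=2$ I would either recompute $\Sym^k$ on demand instead of caching, or note that the bound concerns peak matrix storage rather than the cache. The monomial vectors and sparse tables are excluded, as the statement allows.

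Summing over all partitions and terms gives \eqref{eq:general-complexity}, and taking the maximum over terms gives \eqref{eq:general-memory}.
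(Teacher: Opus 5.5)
Your argument is the paper's: $O(n^{d-1})$ partitions, $O(1)$ Jacobi--Trudi terms per partition, one $O(N_{\max}^3)$ dense SVD per term with $N_{\max}=O(n^{(d-1)^2})$, and sequential processing for the $O(N_{\max}^2)$ storage bound, so the proposal is correct. Your additional bookkeeping is sound and more careful than the paper's proof. That proof never costs the online evaluation of $\Sym^k(A_i)$; your observation that $C_{d,k}$ has exactly one nonzero per column gives $O(n^{d^2})$, within the bound, with equal exponents at $d=2$. It also asserts that caching is lower order, which fails at $d=2$, where the cache is $\Theta(n^3)$ against $O(n^2)$. Your on-demand fix works there because each degree is used only once. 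However, the exemption in the statement covers only matrix-independent tables, not the $A$-dependent monomial vector, which has length $\Theta(n^3)$ at $d=2$. So you should stream \eqref{eq:sparse-evaluation} entry by entry rather than store that vector.
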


\begin{proof}
There are $O(n^{d-1})$ partitions and, for fixed $d$, only a constant number
of Jacobi--Trudi terms per partition.  Dense singular-value decomposition of
a matrix of order \eqref{eq:max-block-size} costs
$O(n^{3(d-1)^2})$.  Processing the terms sequentially gives the asserted
storage bound.  Caching the symmetric powers adds lower-order dense storage
for fixed $d$ and does not change the exponent.
\end{proof}

Dense determinant evaluation by elimination has the same cubic arithmetic
order.  At generic inputs, where the quotient form of
Corollary~\ref{cor:det-master} is directly evaluable, determinants therefore
obey the same worst-case polynomial bound.  Characteristic polynomials can
also be recovered from the multiplicative identities, although their full
symbolic expansion may of course be substantially more expensive than a
single numerical determinant.

For $d=3$, the largest block has order $O(n^4)$, the dense SVD cost per block
is $O(n^{12})$, and the number of partitions is $O(n^2)$.  The resulting
bound is $O(n^{14})$, with peak dense storage $O(n^8)$.  Most partitions
produce much smaller blocks, so this worst-case exponent is not a prediction
of moderate-$n$ wall-clock behavior.

\begin{remark}
The qualification ``for fixed $d$'' is essential.  Both the number of
Jacobi--Trudi terms and the polynomial degree in
\eqref{eq:general-complexity} grow with $d$.  The result is a
fixed-dimension polynomial algorithm, not a polynomial-time algorithm when
$d$ is part of the input.
\end{remark}

\section{Numerical realization, benchmarks, and validation}
\label{sec:numerics}

The formulas have been implemented in the open-source Python package
\texttt{tensorpow} \cite{TensorpowSoftware}.  The benchmarked release,
version 0.3.0, supports $2\times2$ and $3\times3$ inputs, with representation
data through degree $79$ for the former and degree $30$ for the latter.  The
data can be cached locally before a computation, and the implementation never
forms the full $d^n\times d^n$ matrix.  The package also provides an optional
GPU backend for the dense singular-value decompositions; the benchmarks below
use the CPU backend so that all reported timings refer to one controlled
hardware configuration.

\subsection{Exact size comparison}

Table~\ref{tab:size-comparison} gives a machine-independent comparison for
$d=3$.  The full storage column assumes dense \texttt{complex128} data,
namely 16 bytes per entry.  The maximum virtual-block order is obtained from
\[
 \max_{a+b+3m=n}\left\{
   \binom{a+2}{2}\binom{b+2}{2},
   \binom{a+3}{2}\binom{b+1}{2}
 \right\},
\]
where the second term is included only for $b\geq1$.  These values do not
depend on a particular matrix or software implementation.

\begin{table}[ht]
\centering
\caption{Full tensor space versus the largest $3\times3$ virtual block.}
\label{tab:size-comparison}
\begin{tabular}{rrrrr}
\toprule
$n$ & full order $3^n$ & full dense storage & partitions & max. block order \\
\midrule
9  & $19\,683$             & $6.20\times10^{9}$ B  & 12 & 315 \\
12 & $531\,441$            & $4.52\times10^{12}$ B & 19 & 784 \\
15 & $14\,348\,907$        & $3.29\times10^{15}$ B & 27 & 1620 \\
18 & $387\,420\,489$       & $2.40\times10^{18}$ B & 37 & 3025 \\
21 & $10\,460\,353\,203$   & $1.75\times10^{21}$ B & 48 & 5148 \\
24 & $282\,429\,536\,481$  & $1.28\times10^{24}$ B & 61 & 8281 \\
26 & $2\,541\,865\,828\,329$ & $1.03\times10^{26}$ B & 70 & 11025 \\
30 & $205\,891\,132\,094\,649$ & $6.78\times10^{29}$ B & 91 & 18496 \\
\bottomrule
\end{tabular}
\end{table}

Already at $n=12$, direct dense storage would require several terabytes,
whereas the largest reduced block has order below $10^3$. At $n=30$, the
reduction replaces a matrix with about $4.24\times10^{28}$ entries by 91
signed block contributions, none larger than $18496\times18496$.

\subsection{Reproducible CPU benchmark}
\label{sec:benchmark}

The benchmark suite compares the virtual-block method with direct Kronecker
construction wherever the latter remains feasible.  It evaluates two fixed
families,
\begin{align}
 X_n^{(2)}&=\frac12A^{\otimes n}-\frac12B^{\otimes n},
 \label{eq:benchmark-two-term}\\
 X_n^{(3)}&=\frac14A^{\otimes n}+\frac14B^{\otimes n}
             -\frac12C^{\otimes n},
 \label{eq:benchmark-three-term}
\end{align}
where $A,B,C$ are independently generated random $3\times3$ density matrices
from the fixed seed $20260729$.  Each state is obtained as $GG^*/\Tr(GG^*)$
from a complex Gaussian matrix $G$.  The direct method explicitly constructs
the Kronecker powers and computes a dense SVD.  The virtual-block method is
measured both in a warm mode, where a \texttt{TensorPowerCalculator} instance
and cached data are reused, and in a cold-process mode, where each measurement
starts a fresh Python process.

The measurements were made on CachyOS Linux on an Intel Core i5-14600KF at
base clock speed with 32 GB DDR5-6000 CL36 memory.  All BLAS/LAPACK thread
environment variables were fixed to one.  The software stack was Python
3.13.14, \texttt{tensorpow} 0.4.2, NumPy 2.5.2, SciPy 1.18.0, and OpenBLAS
0.3.34.  Representation files were cached before timing, so no network I/O is
included.  Warm virtual-block timings use three repetitions after one warm-up;
Table~\ref{tab:benchmark-selected} reports their medians and median absolute
deviations (MAD).  Direct and cold-process measurements use one repetition at
each $n$.  The complete raw output, environment metadata, tables, and plots are available on GitHub \cite{TensorpowBenchmark}, while the precomputed symmetric representations are archived on Zenodo \cite{TensorpowZenodoData}.

\begin{table}[ht]
\centering
\caption{Selected trace-norm benchmark results in seconds.  Warm virtual-block
entries are median $\pm$ MAD over three repetitions.  The last column is the
relative discrepancy between the direct and three-term virtual-block results;
a dash means that the direct method was not run.}
\label{tab:benchmark-selected}
\scriptsize
\setlength{\tabcolsep}{3.5pt}
\begin{tabular}{r r r r r r}
\toprule
$n$ & Direct, 3-term & VB warm, 2-term & VB warm, 3-term & VB cold, 3-term & Rel. err. \\
\midrule
5  & $6.11\!\times\!10^{-3}$ & $1.10\!\times\!10^{-3}\pm3.6\!\times\!10^{-5}$ & $1.31\!\times\!10^{-3}\pm8.1\!\times\!10^{-5}$ & $0.167$ & $4.1\!\times\!10^{-16}$ \\
7  & $3.66$                  & $4.69\!\times\!10^{-3}\pm1.0\!\times\!10^{-4}$ & $5.71\!\times\!10^{-3}\pm6.4\!\times\!10^{-6}$ & $0.173$ & $1.1\!\times\!10^{-15}$ \\
8  & $106.25$                & $0.0158\pm1.6\!\times\!10^{-5}$ & $0.0185\pm8.6\!\times\!10^{-5}$ & $0.187$ & $1.6\!\times\!10^{-15}$ \\
9  & $2884.90$               & $0.0325\pm1.1\!\times\!10^{-6}$ & $0.0377\pm1.1\!\times\!10^{-4}$ & $0.209$ & $3.3\!\times\!10^{-15}$ \\
12 & ---                     & $0.468\pm3.7\!\times\!10^{-4}$ & $0.520\pm8.0\!\times\!10^{-4}$ & $0.702$ & --- \\
15 & ---                     & $4.63\pm0.011$ & $4.93\pm0.0049$ & $5.20$ & --- \\
18 & ---                     & $45.43\pm0.015$ & $47.15\pm0.027$ & $47.57$ & --- \\
22 & ---                     & $480.78\pm0.016$ & $494.01\pm0.178$ & $488.44$ & --- \\
\bottomrule
\end{tabular}
\end{table}

Figure~\ref{fig:benchmark-plots} shows the full available three-term
series.  The direct method becomes prohibitively expensive between $n=8$ and
$n=9$: its runtime rises from about $106$ seconds to about $2885$ seconds,
whereas the warm virtual-block time at $n=9$ is $0.0377$ seconds.  Agreement
through the entire overlap range $2\leq n\leq9$ is at relative error at most
$3.4\times10^{-15}$.  Beyond the direct range, the three-term virtual-block
calculation remains below one second at $n=12$, below five seconds at $n=15$,
and below one minute at $n=18$.  At $n=18$ the full $3^n\times3^n$ matrix
would contain approximately $1.50\times10^{17}$ entries and occupy about
$2.4\times10^{18}$ bytes in complex double precision.

\begin{figure}[ht]
\centering
\includegraphics[width=0.96\textwidth]{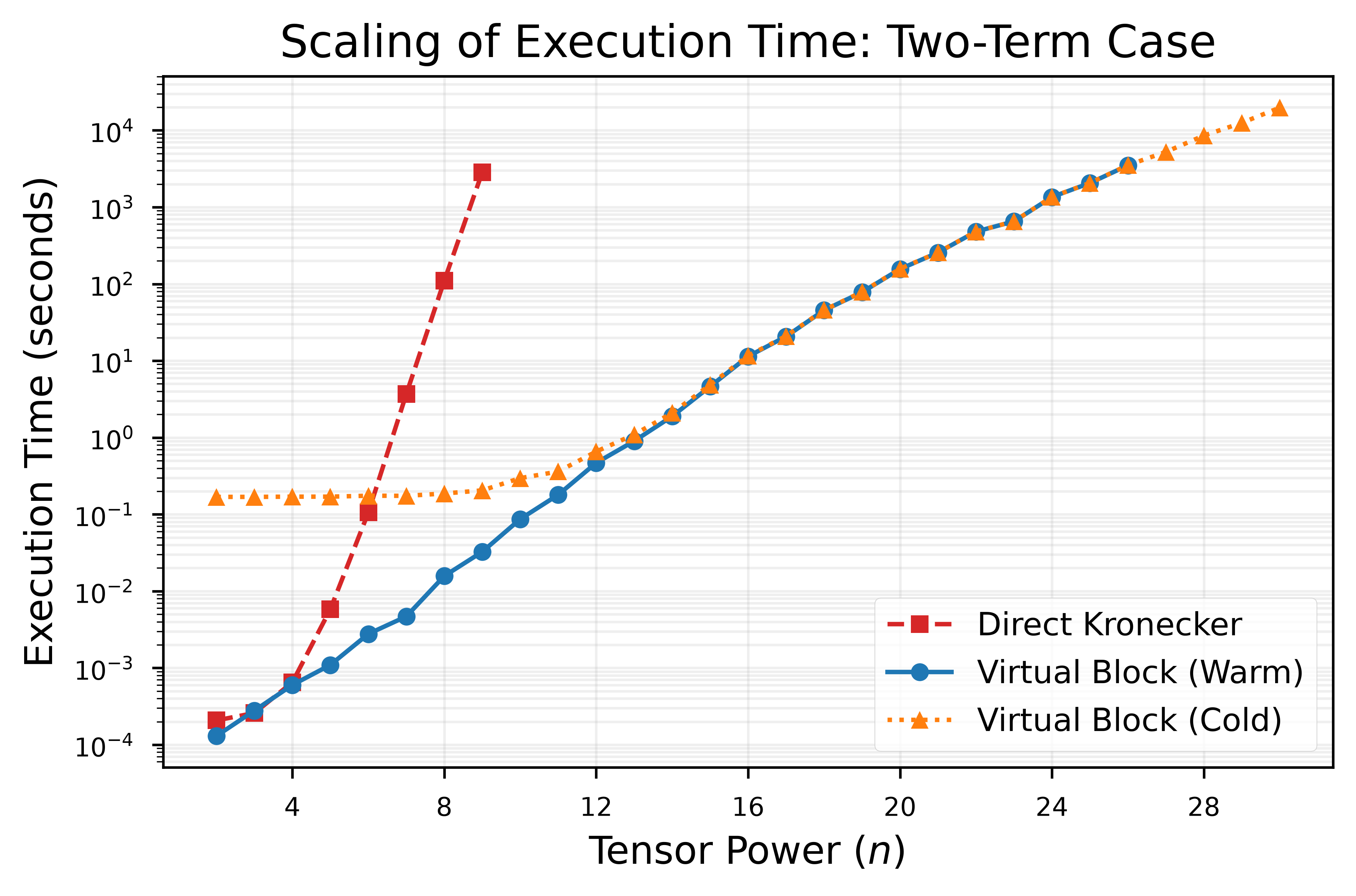}

\vspace{1.5em}

\includegraphics[width=0.96\textwidth]{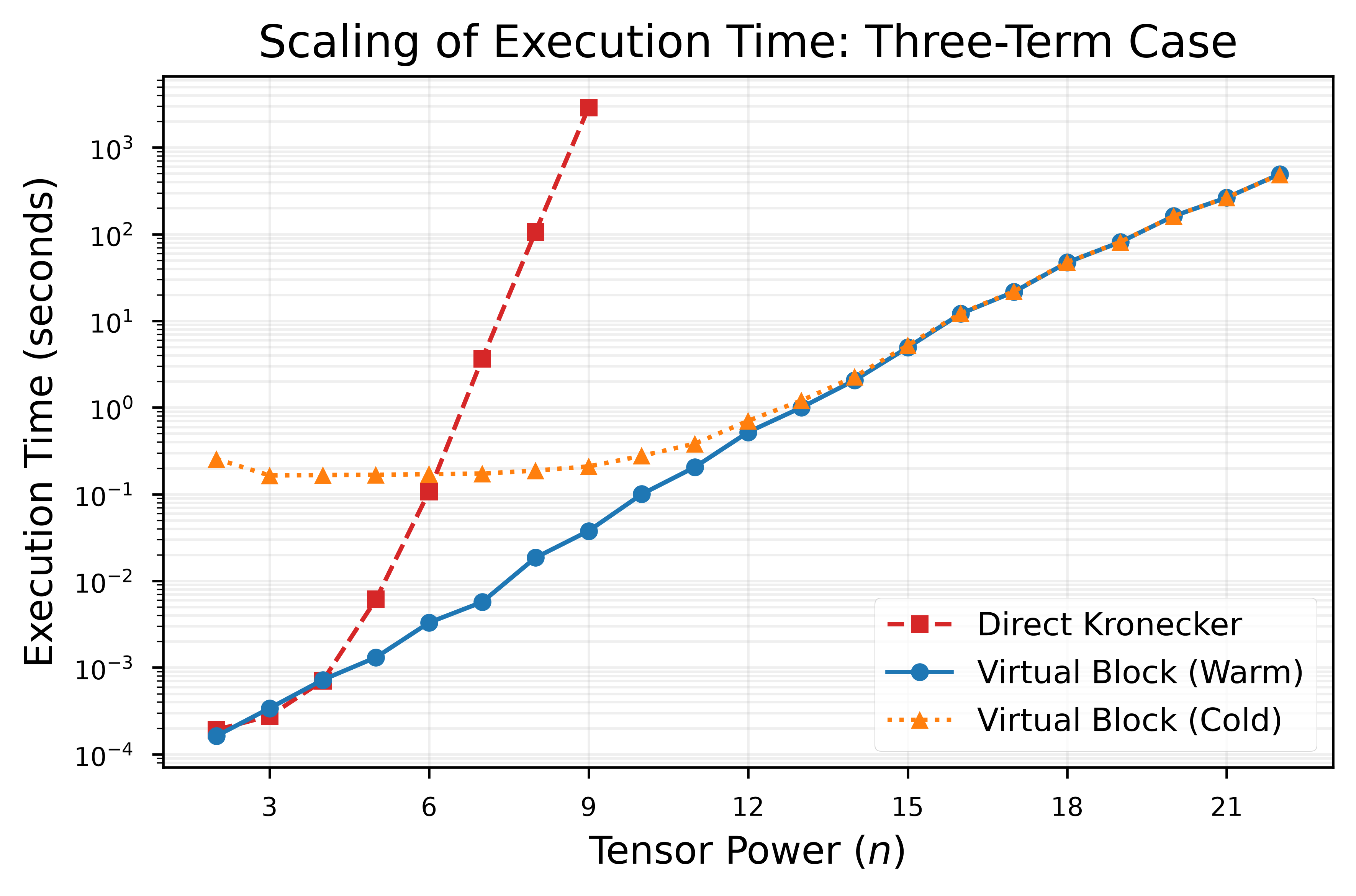}
\caption{Single-thread CPU timings for the two-term (top) and three-term (bottom) trace-norm families. The ordinate is logarithmic. Warm timings reuse the calculator and cached representation data; cold timings start a fresh Python process. The direct method explicitly forms the Kronecker matrix and computes its dense SVD. The virtual-block method scales reliably up to $n=30$. In the two-term case, after $n=26$, warm benchmarking was skipped, since package loading overhead is constant in $n$. The three-term case was only performed until $n=22$, as that was enough to show that the method is almost as fast for three-term, as for two-term, see Table \ref{tab:benchmark-selected}}.
\label{fig:benchmark-plots}
\end{figure}

The two-term and three-term virtual-block timings are close because the sizes
and multiplicities of the representation-theoretic blocks are determined by
$d$ and $n$, not by the number of summands.  At $n=22$, for example, the warm
medians are about $481$ and $494$ seconds, respectively.  The additional term
therefore adds only a small overhead in this test family, even though it
changes the algebraic character of the problem: unlike the two-term
determinant reduction discussed in the introduction, the three-term trace norm
has no comparable elementary spectral factorization.

The optional GPU backend changes only the dense SVD stage and not the representation-theoretic reduction. While the CPU benchmark establishes a rigorous, machine-independent baseline, delegating the SVD stage to the GPU in version 0.4.2 provides an approximately 9-fold ($9\times$) speedup for the largest reduced blocks. Note that GPU timings are not combined with the CPU data in Table~\ref{tab:benchmark-selected}; when utilizing the GPU, device memory can eventually become the limiting resource for the largest blocks even if host memory remains sufficient. This speedup was achieved with an NVIDIA 5070 that has 12 GB of VRAM, while other components and packages were kept the same. The VRAM was sufficient until $n=26$ with partial offloading to RAM.

\subsection{Cancellation and numerical stability}

Exactness in the representation ring does not imply numerical
well-conditioning.  For a fixed $\lambda$, put
\[
  F_{\lambda,\sigma}
  =\norm{\sum_i t_iB_{\lambda,\sigma}(A_i)}_p^p\geq0,
  \qquad
  F_\lambda=\sum_\sigma\sgn(\sigma)F_{\lambda,\sigma}.
\]
Whenever $F_\lambda\neq0$, define the a posteriori cancellation indicator
\begin{equation}\label{eq:cancellation}
  \kappa_{\mathrm{virt}}(\lambda)
  =\frac{\sum_\sigma\abs{F_{\lambda,\sigma}}}{\abs{F_\lambda}}.
\end{equation}
Large values signal loss of relative accuracy in floating-point subtraction.
For $d=3$, this is the sum of the two nonnegative terms in
\eqref{eq:d3-functional} divided by the absolute value of their difference.
An analogous global indicator is obtained from the aggregated terms in
\eqref{eq:d3-aggregated}.

The following safeguards are appropriate in both testing and production
runs:
\begin{enumerate}[leftmargin=2.2em]
  \item validate all feasible small $n$ against direct Kronecker construction;
  \item include random unitary, nonnormal, ill-conditioned, singular, and
        exactly commuting test matrices;
  \item report both absolute and relative residuals, since the true value may
        be close to zero;
  \item monitor \eqref{eq:cancellation} and repeat poorly conditioned cases in
        extended precision;
  \item use pairwise or compensated summation for the signed scalar
        contributions;
  \item use scaling or logarithmic bookkeeping for large determinant powers,
        and never silently truncate a substantially negative computed value of
        $\norm{X_n}_p^p$ to zero.
\end{enumerate}

The benchmark distinguishes cold-start loading from warm repeated evaluation.
This separation is particularly important in hypothesis-testing searches,
where many matrix families are evaluated at the same tensor powers and the
matrix-independent representation data are amortized.

\section{Discussion and outlook}
\label{sec:discussion}

The method separates three structures that are easily conflated. Schur--Weyl duality supplies an orthogonal block decomposition; the
representation ring supplies virtual identities among additive or multiplicative block functionals; and the symmetric-power formula supplies
concrete matrices. These explain both the generality and the limitations of the construction.

Starting with $\U(d)$ is essential for Schatten norms, but imposes no unitary
restriction on the inputs.  The same unitary intertwiner works for
$\GL(d,\C)$ by complexification and for singular matrices by polynomial
continuation.  Determinant characters are equally structural: removing the
last row of a partition reduces the Jacobi--Trudi determinant from size $d$
to at most $d-1$, thereby improving the block-size exponent.

The fixed-dimension polynomial reduction is exact, but the present dense
implementation is not intended to be optimal.  Structure-preserving SVDs,
randomized methods with certified error bounds, and problem-specific sparsity
may extend the practical range.  The current software already permits the SVD
stage to be carried out on a compatible GPU, while leaving the reduction and
its exact formulas unchanged.  The matrix-independent representation data can
be cached locally and distributed as a separate versioned resource.
Other direct-sum additive spectral functionals can be treated by the same
virtual identities.  Multiplicative invariants lead to products with integer
exponents. In contrast, the operator norm and other functionals governed by
a maximum rather than a sum cannot be recovered from a formal virtual
difference alone.

The benchmark release records the random seed, cold and warm timings,
software and BLAS/LAPACK information, thread settings, raw repetitions, and
direct-method residuals, and is archived independently of the manuscript
\cite{TensorpowBenchmark}.  This provides a reproducible performance baseline
without fitting an empirical complexity exponent over a short range of $n$;
the asymptotic statement remains the exact worst-case bound of
Theorem~\ref{thm:complexity}.

From the viewpoint of the motivating application, the main value of the
method is access to finite-copy expressions such as
\eqref{eq:composite-helstrom}, precisely where three or more tensor-power
terms prevent the elementary reductions available for one term or for a
two-term determinant.  The representation-theoretic compression does not
supply a single-letter asymptotic formula by itself, but it makes numerical
exploration of that difficult regime possible far beyond brute force.

\section*{Acknowledgements}

The authors thank Sloan Nietert, who developed the earlier Wolfram
Mathematica realization of the $2\times2$ method under M.W.'s supervision.
That implementation is archived at Zenodo \cite{WeinerNietertSoftware}.

\paragraph{Author contributions.}
M.W. conceived the representation-theoretic approach, including the use of
virtual representations and symmetric-power realizations. Under his supervision, M.Á.J., for his MSc thesis, developed the explicit $3\times3$ realization, designed and
implemented the algorithms and software, and carried out the numerical
experiments.  Both authors contributed to the mathematical presentation,
verified the final manuscript, and accept responsibility for its contents.
The authors are listed alphabetically.

\paragraph{Code and data availability}
The Python implementation \texttt{tensorpow} is available from its GitHub
repository and from the Python Package Index \cite{TensorpowSoftware}.  The
benchmark data, reproducibility metadata, tables, and plotting outputs used in
Section~\ref{sec:benchmark} are archived at Zenodo
\cite{TensorpowBenchmark}.  The earlier Mathematica implementation for
$2\times2$ matrices is archived separately at Zenodo
\cite{WeinerNietertSoftware}.

\section*{Declaration of generative AI and AI-assisted technologies in the manuscript preparation process}
During the preparation of this work, the authors used OpenAI ChatGPT to assist
in restructuring material from the MSc thesis into article form and to
improve the organization, language, and readability of the manuscript.  The
authors reviewed, verified, and edited all mathematical statements, proofs,
references, and numerical claims and take full responsibility for the content
of the publication.

\end{document}